\theoremstyle{plain}
\newtheorem*{rep@theorem}{\rep@title}
\newcommand{\newreptheorem}[2]{%
\newenvironment{rep#1}[1]{%
 \def\rep@title{#2 \ref{##1}}%
 \begin{rep@theorem}}%
 {\end{rep@theorem}}}
\newtheorem{theorem}{Theorem}[section]
\newtheorem{proposition}[theorem]{Proposition}
\newtheorem{corollary}[theorem]{Corollary}
\theoremstyle{definition}
\theoremstyle{remark}
\newcommand{\UnitDisk}{\mathbb{D}}
\newcommand{\RiemannSphere}{\widehat{\mathbb{C}}}
\renewcommand {\tilde} {\widetilde}
\begin{document}

\title{Shapes, fingerprints and rational lemniscates}

\date{June 12 2014}

\author[M. Younsi]{Malik Younsi}
\address{D\'epartement de math\'ematiques et de statistique, Pavillon Alexandre--Vachon, $1045$ av. de la M\'edecine, Universit\'e Laval, Qu\'ebec (Qu\'ebec), Canada, G1V 0A6.}
\email{malik.younsi.1@ulaval.ca}

\keywords{Lemniscates, fingerprints, rational maps, conformal representation. }
\subjclass[2010]{primary 37E10, 30C20; secondary 30F10}

\begin{abstract}
It has been known for a long time that any smooth Jordan curve in the plane can be represented by its so-called fingerprint, an orientation preserving smooth diffeomorphism of the unit circle onto itself. In this paper, we give a new, simple proof of a theorem of Ebenfelt, Khavinson and Shapiro stating that the fingerprint of a polynomial lemniscate of degree $n$ is given by the $n$-th root of a Blaschke product of degree $n$ and that conversely, any smooth diffeomorphism induced by such a map is the fingerprint of a polynomial lemniscate of the same degree. The proof is easily generalized to the case of rational lemniscates, thus solving a problem raised by the previously mentioned authors.
\end{abstract}

\maketitle

\section{Introduction : shapes and fingerprints}

Let $\Gamma$ be a $C^\infty$ Jordan curve in the complex plane $\mathbb{C}$. Such objects are called two-dimensional \textit{shapes}. Let $\Omega_{-}$ and $\Omega_{+}$ denote the bounded and unbounded components of $\RiemannSphere \setminus \Gamma$ respectively, where $\RiemannSphere$ is the Riemann sphere. Then $\Omega_{-}$ and $\Omega_{+}$ are simply connected domains, so by the Riemann mapping theorem there exist conformal maps $\phi_{-}:\mathbb{D} \to \Omega_{-}$ and $\phi_{+}: \mathbb{D}_{+} \to \Omega_{+}$, where $\mathbb{D}=\{z \in \mathbb{C}: |z|<1\}$ is the open unit disk and $\mathbb{D}_{+}:= \RiemannSphere \setminus \overline{\mathbb{D}}$. The map $\phi_{+}$ is uniquely determined by the normalization $\phi_{+}(\infty)=\infty$ and $\phi_{+}'(\infty)>0$, the latter meaning that $\phi_{+}$ has a Laurent development of the form
$$\phi_{+}(z)=az+ \sum_{k=0}^\infty \frac{a_k}{z^k}$$
near $\infty$, for some $a>0$.

It is well-known that $\phi_{+}$ and $\phi_{-}$ extend to $C^{\infty}$ diffeomorphisms on their respective domains. Therefore, we can consider the map $k:=\phi_{+}^{-1} \circ \phi_{-} : \mathbb{T} \to \mathbb{T}$, the so-called \textit{fingerprint} of $\Gamma$, an orientation preserving $C^{\infty}$ diffeomorphism of the unit circle $\mathbb{T}$ onto itself. Note that $k$ is uniquely determined by $\Gamma$ up to postcomposition with an automorphism of $\mathbb{D}$ onto itself, i.e. a map $\phi$ of the form
$$\phi(z) = e^{i\theta} \frac{z-\alpha}{1-\overline{\alpha}z} \qquad (z \in \mathbb{D}),$$
for some real $\theta$ and some $\alpha \in \mathbb{D}$. Moreover, the fingerprint $k$ is invariant under translations and scalings of the curve $\Gamma$; in other words, if $\tilde{\Gamma}:=T(\Gamma)$ where $T(z)=az+b$ ($a>0, b \in \mathbb{C}$), then $\tilde{\Gamma}$ and $\Gamma$ have the same fingerprint.

This yields a map $\mathcal{F}$ from the set of equivalence classes of shapes, modulo linear maps $T$, into the set of equivalence classes of orientation preserving $C^{\infty}$ diffeomorphisms $k: \mathbb{T} \to \mathbb{T}$, modulo automorphisms of the unit disk $\phi$.

The study of two-dimensional shapes and their associated fingerprints was instigated by Kirillov \cite{KIR} in the 1980's and further developed by Sharon and Mumford \cite{SHM} in view of the applications to the field of computer vision and pattern recognition. More precisely, fingerprints appear to be a promising approach to the problem of classifying and recognizing objects from their observed silhouettes. In that regard, the following result is of particular interest :

\begin{theorem}
\label{thm Kir}
The map $\mathcal{F}$ is a bijection.
\end{theorem}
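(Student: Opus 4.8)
The plan is to prove that $\mathcal{F}$ is both injective and surjective on the relevant equivalence classes. The injectivity direction is essentially a rigidity statement: if two shapes produce the same fingerprint (up to disk automorphism), then they coincide up to a linear map $T(z)=az+b$. The surjectivity direction is the substantive analytic content, asserting that every orientation-preserving $C^\infty$ diffeomorphism of $\mathbb{T}$ arises as the fingerprint of some shape, and this is where I expect the real work to lie.

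For \textbf{injectivity}, I would argue as follows. Suppose $\Gamma$ and $\tilde\Gamma$ are shapes with conformal maps $\phi_\pm$, $\tilde\phi_\pm$ whose fingerprints $k=\phi_+^{-1}\circ\phi_-$ and $\tilde k=\tilde\phi_+^{-1}\circ\tilde\phi_-$ agree modulo a disk automorphism. After adjusting the inner maps by that automorphism, we may assume $k=\tilde k$ on $\mathbb{T}$. Then on $\mathbb{T}$ we have $\phi_+^{-1}\circ\phi_-=\tilde\phi_+^{-1}\circ\tilde\phi_-$, which rearranges to $\tilde\phi_+\circ\phi_+^{-1}=\tilde\phi_-\circ\phi_-^{-1}$ as a pair of maps agreeing on $\Gamma$ (after we normalize the curves to share the same fingerprint-defining data). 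The idea is that $\tilde\phi_+\circ\phi_+^{-1}$ is holomorphic on $\Omega_+$, the other is holomorphic on $\Omega_-$, and they match continuously across $\Gamma$; by a Morera/removable-singularity gluing argument they define a single holomorphic map on all of $\RiemannSphere$, hence a Möbius transformation. Tracking the normalizations at $\infty$ (recall $\phi_+$ fixes $\infty$ with positive derivative) forces this Möbius map to be of the form $az+b$, giving $\tilde\Gamma=T(\Gamma)$ as desired.

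For \textbf{surjectivity}, given an orientation-preserving $C^\infty$ diffeomorphism $k:\mathbb{T}\to\mathbb{T}$, the goal is to produce conformal maps $\phi_\pm$ on the two disks whose boundary values satisfy $\phi_+^{-1}\circ\phi_-=k$, and then set $\Gamma=\phi_+(\mathbb{T})=\phi_-(\mathbb{T})$. The natural route is through the theory of conformal welding: one seeks a Jordan curve $\Gamma$ and a pair of uniformizations realizing the prescribed identification. Concretely, I would attempt to solve a Beltrami equation, transporting $k$ to a quasiconformal map of the sphere whose conformal distortion is supported appropriately, then apply the measurable Riemann mapping theorem to obtain the two conformal maps with the correct matching on the circle; smoothness of $k$ should upgrade to $C^\infty$ smoothness of $\Gamma$ via elliptic regularity. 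The uniqueness up to disk automorphism that appears in the welding solution is exactly what makes $\mathcal{F}$ well-defined on equivalence classes.

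The \textbf{main obstacle} is the surjectivity step, specifically the existence and smoothness in conformal welding. Injectivity reduces to a clean analytic-continuation argument once the normalizations are handled, but surjectivity requires either invoking the full solvability of the welding problem for smooth diffeomorphisms or constructing the curve directly via quasiconformal methods; establishing that the resulting weld is a genuine $C^\infty$ Jordan curve (rather than merely a quasicircle) is the delicate point and would be where I concentrate the technical effort.
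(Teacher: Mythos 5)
Your proposal is correct and takes essentially the same route as the paper: the paper establishes injectivity by exactly your rearrangement argument, invoking the conformal removability of $C^\infty$ Jordan curves (your Morera/gluing step is the standard proof of that removability for smooth curves, granted the smooth boundary extension of the Riemann maps), and it defers surjectivity to Kirillov and to Pfluger's conformal welding theorem based on existence and uniqueness for the Beltrami equation, which is precisely the quasiconformal-extension argument you sketch. The only points worth making explicit are that a $C^\infty$ diffeomorphism of $\mathbb{T}$ is quasisymmetric (so the quasiconformal extension exists at all) and that the $C^\infty$ regularity of the welded curve is indeed the delicate step, just as you flagged.
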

A proof of Theorem \ref{thm Kir} was given by Kirillov \cite{KIR} in 1987, although it follows from the so-called \textit{conformal welding theorem} of Pfluger \cite{PFL} based on the existence and uniqueness of solutions to the Beltrami equation. The interested reader may consult \cite[Chapter 2, Section 7]{LEV} and \cite{HAM} for additional information.

Note that the injectivity of $\mathcal{F}$ follows directly from the fact that $C^{\infty}$ Jordan curves are conformally removable. Indeed, suppose that $\Gamma$ and $\tilde{\Gamma}$ have the same fingerprint. Let $\phi_{+}, \phi_{-}, \tilde{\phi}_{+}, \tilde{\phi}_{-}$ be the corresponding conformal maps as above. Then
$$\phi_{+}^{-1} \circ \phi_{-} = \tilde{\phi}_{+}^{-1} \circ \tilde{\phi}_{-}$$
on $\mathbb{T}$, i.e.
$$\tilde{\phi}_{+} \circ \phi_{+}^{-1} = \tilde{\phi}_{-} \circ \phi_{-}^{-1}$$
on $\Gamma$. It follows that the map $\tilde{\phi}_{+} \circ \phi_{+}^{-1}$ can be extended to a homeomorphism of $\RiemannSphere$ onto itself, conformal outside $\Gamma$. By the aforementioned removability property, $\tilde{\phi}_{+} \circ \phi_{+}^{-1}$ is in fact a M\"{o}bius map, which is necessarily of the form $T(z)=az+b$ for some $a>0$ and $b \in \mathbb{C}$, in view of the normalizations of $\tilde{\phi}_{+}$ and $\phi_{+}$ near $\infty$.

Using Theorem \ref{thm Kir}, Sharon and Mumford described a numerical method enabling in practice to recover $\Gamma$ from its associated fingerprint $k$ and vice versa. Roughly speaking, the method consists of approximating $\Gamma$ by a polygonal curve and then using the Schwarz-Christoffel toolbox of Driscoll and Trefethen to compute the Riemann maps. See also \cite{MAR} for more numerical examples of fingerprints and shapes.

Another interesting approach is the one introduced by Ebenfelt, Khavinson and Shapiro \cite{EKS} based on approximating shapes by \textit{polynomial lemniscates}, that is sets of the form $\{ z \in \RiemannSphere : |P(z)|=1\}$, where $P$ is a polynomial. This is motivated by a classical result of Hilbert (see e.g. \cite[Chapter 4]{WAL}) saying that such lemniscates are dense in the set of $C^{\infty}$ Jordan curves, with respect to the Hausdorff metric. Moreover, the fingerprint of a polynomial lemniscate of degree $n$ is particularly simple: it is an $n$-th root of a Blaschke product of degree $n$. Conversely, the $n$-th root of any Blaschke product of degree $n$ arises as the fingerprint of a polynomial lemniscate of the same degree (see \cite[Theorem 3.1]{EKS}). Note that this gives an intuitive explanation of why Theorem \ref{thm Kir} holds, since every orientation preserving $C^{\infty}$ diffeomorphism of the unit circle onto itself can be approximated in the $C^{1}$ norm by roots of Blaschke products (see \cite[Theorem 2.3]{EKS}).

The goal of the present paper is to generalize this to the case of rational lemniscates. More precisely, it was conjectured at the end of \cite{EKS} that fingerprints of rational lemniscates of degree $n$ are precisely the $C^{\infty}$ diffeomorphisms $k:\mathbb{T} \to \mathbb{T}$ arising from the functional equation $A \circ k = B$, where $A$ and $B$ are Blaschke products of degree $n$. We give a proof of this conjecture in section \ref{secrat}, based on a new and simpler proof of the corresponding result for polynomial lemniscates (\cite[Theorem 3.1]{EKS}), which is described in detail in section \ref{secpol}.

\section{Polynomial Lemniscates}
\label{secpol}

Let $P$ be a (complex) polynomial of degree $n$. The \textit{lemniscate} of $P$, noted $\Gamma(P)$, is defined by
$$\Gamma(P):= \{ z \in \RiemannSphere : |P(z)|=1\}.$$
Following our preceding notation, we also define
$$\Omega_{-}(P):= \{z \in \RiemannSphere : |P(z)|<1\}$$
and
$$\Omega_{+}(P):= \{z \in \RiemannSphere : |P(z)|>1\}.$$
Note that by the maximum modulus principle, $\Omega_{+}(P)$ is connected. Following \cite{EKS}, we shall say that the lemniscate $\Gamma(P)$ is \textit{proper} if $\Omega_{-}(P)$ is connected. In this case, $\Omega_{-}(P)$ is simply connected, since its complement is connected.

The starting point of the study of the fingerprints of polynomial lemniscates in \cite{EKS} is the following characterization of proper lemniscates, an easy consequence of the Riemann-Hurwitz formula as found in e.g. \cite[Section 10.2]{SHE}.

\begin{proposition}
\label{proper}
Let $P$ be a polynomial of degree $n$. The following are equivalent:

\begin{enumerate}
\item The lemniscate $\Gamma(P)$ is proper.
\item All the $n-1$ critical values of $P$ (counted with multiplicites) belong to the unit disk $\mathbb{D}$.
\end{enumerate}
\end{proposition}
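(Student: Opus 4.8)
The plan is to read off the number of connected components of $\Omega_{-}(P)$ directly from the Riemann--Hurwitz formula. Regard $P$ as a holomorphic self-map of $\RiemannSphere$ of degree $n$, with $P(\infty)=\infty$. Since $\Omega_{-}(P)=P^{-1}(\mathbb{D})$ and every $w \in \mathbb{D}$ has exactly $n$ preimages counted with multiplicity, all of which satisfy $|P|<1$ and hence lie in $\Omega_{-}(P)$, the restriction $P\colon \Omega_{-}(P)\to\mathbb{D}$ is a proper holomorphic branched covering of degree $n$. The number $c$ of components of $\Omega_{-}(P)$ is exactly what condition (1) is about, so it suffices to express $c$ in terms of the location of the critical points.

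First I would show that each connected component of $\Omega_{-}(P)$ is simply connected. The key input is that $\Omega_{+}(P)$ is connected by the maximum modulus principle, so its closure $\RiemannSphere\setminus\Omega_{-}(P)=\{|P|\ge 1\}$ is connected as well. Given a component $U$ of $\Omega_{-}(P)$, its complement $\RiemannSphere\setminus U$ equals $(\RiemannSphere\setminus\Omega_{-}(P))$ together with all other components; each such other component $U'$ has $\partial U'\subseteq\partial\Omega_{-}(P)\subseteq\RiemannSphere\setminus\Omega_{-}(P)$ and therefore attaches to the connected set $\RiemannSphere\setminus\Omega_{-}(P)$, so $\RiemannSphere\setminus U$ is connected. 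A connected open subset of the sphere with connected complement is simply connected, hence every component of $\Omega_{-}(P)$ is simply connected and contributes $1$ to the Euler characteristic. Thus $\chi(\Omega_{-}(P))=c$.

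Finally, Riemann--Hurwitz for the degree-$n$ proper map $P\colon\Omega_{-}(P)\to\mathbb{D}$ reads $\chi(\Omega_{-}(P))=n\,\chi(\mathbb{D})-N=n-N$, where $N$ is the number of critical points of $P$ lying in $\Omega_{-}(P)$, counted with multiplicity, and $\chi(\mathbb{D})=1$. A finite critical point lies in $\Omega_{-}(P)$ precisely when its critical value lies in $\mathbb{D}$, and $P$ has exactly $n-1$ finite critical points counted with multiplicity, the remaining ramification of $P$ being concentrated at $\infty\in\Omega_{+}(P)$. Combining the two expressions yields the identity $c=n-N$, so $\Omega_{-}(P)$ is connected if and only if $N=n-1$, that is, if and only if all $n-1$ critical values lie in $\mathbb{D}$. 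The main obstacle I anticipate is the simple-connectivity step, which must rule out handles or holes in the components so that no extra terms enter $\chi(\Omega_{-}(P))$, together with the correct invocation of Riemann--Hurwitz in the bordered (open-surface) setting rather than for closed surfaces. I note that this argument requires no regularity hypothesis on $\Gamma(P)$: a critical point whose critical value has modulus one lies on $\Gamma(P)$, contributes to neither $N$ nor the component count, and the identity $c=n-N$ persists.
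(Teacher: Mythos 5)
Your proof is correct and follows essentially the route the paper itself indicates: the paper gives no proof of Proposition \ref{proper}, stating only that it is ``an easy consequence of the Riemann--Hurwitz formula'' with a citation to Sheil-Small, and your argument --- connectedness of $\Omega_{+}(P)$ forcing each component of $\Omega_{-}(P)$ to be simply connected, then Riemann--Hurwitz for the proper degree-$n$ map $P\colon\Omega_{-}(P)\to\mathbb{D}$ giving $c=n-N$ --- is exactly that argument, carried out correctly (including the boundary case of critical values of modulus one).
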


Consider now a polynomial $P$ of degree $n$ and assume that its corresponding lemniscate $\Gamma(P)$ is proper. Clearly, we can assume without loss of generality that the degree $n$ coefficient of $P$ is positive. Then $\Gamma(P)$ is a $C^{\infty}$ Jordan curve, and thus yields a fingerprint $k:\mathbb{T} \to \mathbb{T}$. The following theorem characterizes exactly which orientation preserving diffeomorphisms $k : \mathbb{T} \to \mathbb{T}$ are obtained in this way :

\begin{theorem}[Ebenfelt, Khavinson and Shapiro \cite{EKS}]
\label{Pfingerprints}
The fingerprint $k: \mathbb{T} \to \mathbb{T}$ of $\Gamma(P)$ is given by
$$k(z)=B(z)^{1/n},$$
where $B$ is the Blaschke product of degree $n$
$$B(z) = e^{i\theta} \prod_{k=1}^n \frac{z-a_k}{1-\overline{a_k}z}$$
for some real number $\theta$, where $a_k:=\phi_{-}^{-1}(\xi_k)$ and $\xi_1, \dots, \xi_n$ are the zeros of $P$, counted with multiplicities.

Conversely, given any Blaschke product $B$ of degree $n$, there is a polynomial $P$ of the same degree whose lemniscate $\Gamma(P)$ is proper and has $k=B^{1/n}$ as its fingerprint. Moreover, $P$ is unique up to postcomposition with a linear map of the form $T(z)=az+b$, where $a>0$ and $b \in \mathbb{C}$.
\end{theorem}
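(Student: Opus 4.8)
The plan is to base everything on the elementary fact that a proper holomorphic self-map of $\mathbb{D}$ of degree $n$ is exactly a Blaschke product of degree $n$, and to read the fingerprint off the boundary values of two such maps.

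\emph{Forward direction.} Since $\Gamma(P)$ is proper, $\Omega_{-}(P)$ is simply connected and contains all $n$ zeros of $P$. Every $w \in \mathbb{D}$ has all of its $P$-preimages in $\Omega_{-}(P)$ (there $|P|<1$), so $P \colon \Omega_{-}(P) \to \mathbb{D}$ is proper of degree $n$ and hence $B := P \circ \phi_{-} \colon \mathbb{D} \to \mathbb{D}$ is a Blaschke product of degree $n$, whose zeros are $\phi_{-}^{-1}(\xi_k)=a_k$. On the other side, $P \colon \Omega_{+}(P) \to \mathbb{D}_{+}$ is proper of degree $n$ and totally ramified over $\infty$, since $P(\infty)=\infty$ with local degree $n$ and $\phi_{+}(\infty)=\infty$; Riemann--Hurwitz (as in Proposition \ref{proper}) then shows this is the only ramification. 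Conjugating by $z \mapsto 1/z$ turns $P \circ \phi_{+}$ into a degree-$n$ Blaschke product whose only critical point is a zero of order $n$ at the origin, forcing $P \circ \phi_{+}(z) = c\,z^{n}$ on $\mathbb{D}_{+}$ with $|c|=1$. Finally, for $\zeta = \phi_{-}(z)=\phi_{+}(w) \in \Gamma(P)$ with $w=k(z)$, evaluating $P(\zeta)$ both ways gives $B(z)=c\,k(z)^{n}$ on $\mathbb{T}$, i.e. $k=(c^{-1}B)^{1/n}$, an $n$-th root of a degree-$n$ Blaschke product (the constant $c^{-1}$ being absorbed into $e^{i\theta}$).

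\emph{Converse, existence.} Given $B$, I would first verify that $k:=B^{1/n}$ is a genuine orientation-preserving $C^{\infty}$ diffeomorphism of $\mathbb{T}$: writing $B(e^{i\theta})=e^{i\beta(\theta)}$ with $\beta$ smooth, strictly increasing and $\beta(\theta+2\pi)=\beta(\theta)+2\pi n$, the formula $e^{i\theta}\mapsto e^{i\beta(\theta)/n}$ defines one. The welding theorem (Theorem \ref{thm Kir}) then supplies a $C^{\infty}$ shape $\Gamma$ with conformal maps $\phi_{\pm}$ satisfying $\phi_{+}^{-1}\circ\phi_{-}=k$ on $\mathbb{T}$. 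Setting $P:=B\circ\phi_{-}^{-1}$ on $\Omega_{-}$ and $P:=(\phi_{+}^{-1})^{n}$ on $\Omega_{+}$, the identity $k^{n}=B$ makes the two definitions agree on $\Gamma$, producing a function that is continuous on $\RiemannSphere$, holomorphic off $\Gamma$, sends $\Omega_{-}$ into $\mathbb{D}$ and $\Omega_{+}$ into $\mathbb{D}_{+}$, and has a single pole of order $n$ at $\infty$. The main obstacle is precisely here: to conclude that $P$ is an honest polynomial I must remove the curve $\Gamma$, upgrading ``continuous and holomorphic off $\Gamma$'' to ``holomorphic''. This is where the smoothness of $\Gamma$ enters, via removability of $C^{\infty}$ curves for continuous functions (Morera's theorem); granting it, $P$ is meromorphic on $\RiemannSphere$ with one pole of order $n$, hence a degree-$n$ polynomial whose lemniscate $\Gamma(P)=\Gamma$ is proper with fingerprint $k$.

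\emph{Converse, uniqueness.} If $P_1,P_2$ are two such polynomials, normalized to have positive leading coefficient and the same fingerprint, then injectivity of $\mathcal{F}$ gives $\Gamma(P_2)=T(\Gamma(P_1))$ for a linear $T(z)=az+b$, $a>0$; since $T(\Gamma(P_1))=\Gamma(P_1\circ T^{-1})$, replacing $P_1$ by $P_1\circ T^{-1}$ reduces to the case $\Gamma(P_1)=\Gamma(P_2)$, and in particular the same normalized $\phi_{+}$. The forward-direction identity $P_i=c_i\,(\phi_{+}^{-1})^{n}$ on $\Omega_{+}$ then shows $P_1=\lambda P_2$ on an open set with $|\lambda|=1$, hence everywhere; positivity of the leading coefficients forces $\lambda=1$. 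Thus $P$ is unique up to the precomposition $P\mapsto P\circ T$ by a linear map $T(z)=az+b$, $a>0$ --- exactly the translation--scaling ambiguity of the lemniscate under which the fingerprint is invariant.
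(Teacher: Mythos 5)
Your proof is correct, and while your forward direction and uniqueness arguments essentially coincide with the paper's (proper degree-$n$ holomorphic self-maps of $\mathbb{D}$ and $\mathbb{D}_{+}$ are Blaschke products; injectivity of $\mathcal{F}$ handles uniqueness), your existence argument takes a genuinely different route. The paper proves existence via Theorem \ref{thmfactorP}: it solves $A \circ C = B$ with $A(z)=z^n$ on $\mathbb{T}$ by covering-space lifting, observes that $C$ is analytic, welds $\overline{\mathbb{D}}$ to $\RiemannSphere \setminus \mathbb{D}$ along $C$ by elementary analytic welding, uniformizes the resulting genus-zero surface, and applies Morera on that abstract surface to the piecewise-defined map $F$. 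You instead feed $k = B^{1/n}$ (after rightly checking it is a well-defined smooth circle diffeomorphism) into the existence half of Theorem \ref{thm Kir}, obtain a concrete shape $\Gamma \subset \RiemannSphere$, define $P$ piecewise as $B \circ \phi_{-}^{-1}$ on $\Omega_{-}$ and $(\phi_{+}^{-1})^{n}$ on $\Omega_{+}$, and invoke Morera-removability of the smooth curve $\Gamma$. The two constructions are secretly the same welding --- your Morera step on $\Gamma$ mirrors the paper's Morera step on $X$, and the paper's welding homeomorphism $C$ is precisely an $n$-th root of $B$ on $\mathbb{T}$, i.e.\ your $k$ --- but they differ in how the weld is realized. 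The paper performs it by hand, exploiting the analyticity of $C$, so that existence is proved \emph{independently} of the hard conformal welding theorem; this is the point of the paper's ``simple proof,'' and is why the paper can present lemniscate fingerprints as explaining Theorem \ref{thm Kir} rather than resting on it. You outsource the weld to Theorem \ref{thm Kir}, whose existence half relies on Pfluger's welding theorem and the Beltrami equation --- a much deeper input, though logically legitimate here, since the paper quotes Theorem \ref{thm Kir} as known and its proof is independent of lemniscates, so there is no circularity. What your route buys is concreteness and brevity: no abstract Riemann surface or uniformization step, and $\Gamma(P)=\Gamma$ with fingerprint $k$ drops out immediately from the piecewise definition and the inequalities $|P|<1$ on $\Omega_{-}$, $|P|>1$ on $\Omega_{+}$. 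Two cosmetic remarks: your Riemann--Hurwitz detour for $P \circ \phi_{+} = cz^{n}$ is unnecessary, since a degree-$n$ Blaschke product of $\mathbb{D}_{+}$ with all $n$ poles at $\infty$ is already of that form; and you correctly interpret the theorem's ``postcomposition'' as the substitution $P \mapsto P \circ T$, which is the intended translation--scaling ambiguity of the curve.
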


The proof of the first part of Theorem \ref{Pfingerprints} in \cite{EKS} is elementary, but we reproduce it for the reader's convenience.

Let $\phi_{-}:\mathbb{D} \to \Omega_{-}(P)$ be a Riemann map. Then $P \circ \phi_{-}$ is a degree $n$ proper holomorphic map of $\mathbb{D}$ onto itself, and hence must be a Blaschke product of degree $n$, say
$$B(z) = e^{i\theta} \prod_{k=1}^n \frac{z-a_k}{1-\overline{a_k}z}$$
for some real number $\theta$, where $a_k=\phi_{-}^{-1}(\xi_k)$.

Now, let $\phi_{+}: \mathbb{D}_{+} \to \Omega_{+}(P)$ be the conformal map normalized by $\phi_{+}(\infty)=\infty$ and $\phi_{+}'(\infty)>0$. Then $P \circ \phi_{+}$ is a degree $n$ proper holomorphic map of $\mathbb{D}_{+}$ onto itself, and hence must also be a Blaschke product, having all of its poles at $\infty$. Therefore, $P \circ \phi_{+}(z) = cz^n$ for some unimodular constant $c$. Since $\phi_{+}'(\infty)>0$ and the highest degree coefficient of $P$ is positive, we get that $c=1$. This completes the proof of the first part of Theorem \ref{Pfingerprints}.

The proof of the second part in \cite{EKS} is much more complicated and essentially relies on Koebe's continuity method based on Brouwer's invariance of domain theorem.

Here we give a simpler proof based on the following result, which seems to be of enough independent interest to be stated separately :

\begin{theorem}
\label{thmfactorP}
Let $B$ be a Blaschke product of degree $n$. Then there exist a polynomial $P$ of degree $n$ and a conformal map $\phi_{-}:\mathbb{D} \to \Omega_{-}(P)$ such that
$$B=P \circ \phi_{-}$$
on $\mathbb{D}$.
\end{theorem}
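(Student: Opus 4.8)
The plan is to reconstruct $P$ by a conformal welding, realizing $B$ on the inside and the $n$-th power map on the outside of a Jordan curve. First I would record the boundary behaviour of the two model maps: the Blaschke product restricts to an orientation-preserving $n$-fold covering $B:\mathbb{T}\to\mathbb{T}$ which is a local diffeomorphism (a Blaschke product of degree $n$ has no critical points on $\mathbb{T}$, its argument being strictly increasing along the circle), and the power map $\sigma_n(z)=z^n$ restricts to another $n$-fold covering of $\mathbb{T}$. Since two coverings of the circle of the same degree are equivalent, the lifting criterion provides a $C^{\infty}$ orientation-preserving map $h:\mathbb{T}\to\mathbb{T}$ with $\sigma_n\circ h=B$ on $\mathbb{T}$; a degree count ($n\cdot\deg h=n$) confirms $\deg h=1$, so $h$ is a genuine diffeomorphism.

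Next I would feed $h$ into the conformal welding theorem, i.e.\ the surjectivity half of Theorem \ref{thm Kir}: there is a $C^{\infty}$ Jordan curve $\Gamma$, with bounded and unbounded complementary components $\Omega_{-}$ and $\Omega_{+}$, and conformal maps $\phi_{-}:\mathbb{D}\to\Omega_{-}$ and $\phi_{+}:\mathbb{D}_{+}\to\Omega_{+}$ (normalized at $\infty$) such that $h=\phi_{+}^{-1}\circ\phi_{-}$ on $\mathbb{T}$, equivalently $\phi_{-}=\phi_{+}\circ h$. Both maps extend to $C^{\infty}$ diffeomorphisms of the closed domains. I would then define $P$ on $\RiemannSphere$ by $P:=B\circ\phi_{-}^{-1}$ on $\overline{\Omega_{-}}$ and $P:=\sigma_n\circ\phi_{+}^{-1}$ on $\overline{\Omega_{+}}$. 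The two definitions agree on $\Gamma$: for $w=\phi_{-}(z)=\phi_{+}(h(z))$ with $z\in\mathbb{T}$ one has $B(\phi_{-}^{-1}(w))=B(z)=h(z)^n=\sigma_n(\phi_{+}^{-1}(w))$. Hence $P$ is continuous on $\RiemannSphere$ and holomorphic off $\Gamma$.

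To promote $P$ to a global holomorphic map I would invoke the conformal removability of the $C^{\infty}$ Jordan curve $\Gamma$ (the same removability already used for the injectivity of $\mathcal{F}$ in the Introduction), which upgrades $P$ to a rational map of $\RiemannSphere$. The point is then to check it is a polynomial of degree $n$: on $\overline{\Omega_{+}}$ the only preimage of $\infty$ is $\phi_{+}(\infty)=\infty$, where $\sigma_n\circ\phi_{+}^{-1}$ has a pole of order $n$, while $P$ stays inside $\overline{\mathbb{D}}$ on $\overline{\Omega_{-}}$; thus $P^{-1}(\infty)=\{\infty\}$ with local degree $n$, forcing $P$ to be a polynomial of degree $n$. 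Finally I would identify the level sets: $|P|<1$ exactly on $\Omega_{-}$ and $|P|>1$ on $\Omega_{+}$, so $\Omega_{-}(P)=\Omega_{-}$ is connected, $\Gamma(P)=\Gamma$ is proper, and $\phi_{-}:\mathbb{D}\to\Omega_{-}(P)$ is the desired conformal map with $P\circ\phi_{-}=B$.

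The routine parts are the covering-space lift and the level-set bookkeeping. The two steps carrying the real content are, first, the appeal to conformal welding to manufacture the curve $\Gamma$ from the abstract matching data $h$, and second --- the main obstacle --- the removability argument needed to glue the two holomorphic pieces into a single rational map across $\Gamma$; this is exactly where the smoothness of the welded curve is essential, and it is what lets the construction produce an honest polynomial rather than merely a piecewise-defined map.
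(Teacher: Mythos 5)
Your proposal is correct, but it takes a genuinely different route from the paper. You manufacture the curve first: after constructing the lift $h$ with $\sigma_n \circ h = B$ (the same covering-space step the paper performs), you invoke the surjectivity half of Theorem \ref{thm Kir} to weld a concrete $C^\infty$ Jordan curve $\Gamma \subset \RiemannSphere$ with fingerprint $h$, transplant the two model maps $B$ and $\sigma_n$ to the two sides of $\Gamma$, and glue them into a single polynomial via removability of $\Gamma$. The paper never produces $\Gamma$ in advance: it welds $\overline{\mathbb{D}}$ and $\RiemannSphere \setminus \mathbb{D}$ \emph{abstractly} along the lift (which extends analytically across $\mathbb{T}$, so the complex structure on the welded surface $X$ is defined by hand, with no quasiconformal machinery), uniformizes $X \to \RiemannSphere$ by a map $g$ fixing $\infty$, and pushes the piecewise model map $F$ ($B$ inside, $z^n$ outside) down to the rational map $F \circ g^{-1}$, identified as a degree-$n$ polynomial by counting poles; the lemniscate then appears a posteriori as $g(\mathbb{T})$, and holomorphy across the seam is Morera's theorem applied on $X$ --- the exact analogue of your removability step, just performed upstairs. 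The trade-off: your argument stays on the sphere and avoids abstract Riemann surfaces, but it makes Theorem \ref{thmfactorP} logically dependent on the hard Pfluger--Kirillov welding theorem (measurable Riemann mapping theorem), whereas the paper's proof is independent of Theorem \ref{thm Kir} --- indeed the paper advertises that its lemniscate results give an intuitive explanation of \emph{why} Theorem \ref{thm Kir} holds, a point your architecture would forfeit. The paper only uses Theorem \ref{thm Kir} for the uniqueness clause of Theorem \ref{Pfingerprints}, so there is no circularity in your approach, merely a heavier hypothesis.

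One imprecision worth fixing: the ``conformal removability'' cited in the introduction is removability for \emph{homeomorphisms} of $\RiemannSphere$ conformal off the curve, which is not literally the statement your gluing needs. What you need is that a $C^\infty$ (indeed any rectifiable) Jordan curve is removable for functions \emph{continuous} across it and holomorphic off it --- a classical and more elementary fact, proved by Morera's theorem or the Cauchy integral formula applied to the two sides. With that substitution, your pole count at $\infty$ (using $\phi_+'(\infty) > 0$, so $P(w) \sim a^{-n} w^n$) and your level-set bookkeeping correctly yield that $P$ is a degree-$n$ polynomial with $\Omega_-(P) = \Omega_-$ and $P \circ \phi_- = B$.
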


\begin{proof}
Let $A(z):=z^n$. Then $A$ and $B$ are both covering maps of degree $n$ of $\mathbb{T}$ onto itself. It follows from the basic theory of covering spaces (see e.g. \cite[Section 1.3]{HAT}) that there exist a homeomorphism $C:\mathbb{T} \to \mathbb{T}$ such that $A \circ C = B$. Clearly, $C$ extends analytically to a neighborhood of the unit circle. Consider the Riemann surface $X:=\overline{\mathbb{D}} \sqcup(\RiemannSphere \setminus \mathbb{D}) / \sim_C$ formed by welding conformally a copy of $\RiemannSphere \setminus \mathbb{D}$ to the unit disk using the analytic homeomorphism $C$ on $\mathbb{T}$. Topologically, $X$ is the connected sum of $\overline{\mathbb{D}}$ with the closed disk $\RiemannSphere \setminus \mathbb{D}$, so it is homeomorphic to a sphere. By the uniformization theorem, there exist a biholomorphism $g:X \to \RiemannSphere$ with $g(\infty)=\infty$.

Now, define $F : X \to \RiemannSphere$ by
$$
F(z) := \left\{ \begin{array}{rl} B(z)  &  \mbox{for } z\in \overline{\mathbb{D}} \\ A(z) & \mbox{for }z \in \RiemannSphere \setminus \mathbb{D} \end{array} \right..
$$
Note that the map $F$ is well-defined since $A \circ C = B$ on $\mathbb{T}$. Furthermore, $F$ is holomorphic on $X$, by Morera's theorem. The composition $F \circ g^{-1} : \RiemannSphere \to \RiemannSphere$ is meromorphic and thus equal to a rational map $R$. By construction, $F$ has exactly one pole, at $\infty$, of multiplicity $n$. It follows that $R=P$, a polynomial of degree $n$. On $\mathbb{D}$, we have $P \circ g = F = B$. Moreover,
$g^{-1}(P^{-1}(\mathbb{D})) = F^{-1}(\mathbb{D})=\mathbb{D}$, so that $g(\mathbb{D})=P^{-1}(\UnitDisk)=\Omega_{-}(P)$. Hence the result follows by letting $\phi_{-}:=\left.g\right|_\mathbb{D}$.

\end{proof}

The second part of Theorem \ref{Pfingerprints} follows directly from Theorem \ref{thmfactorP}. Indeed, let $B$ be a Blaschke product of degree $n$ and consider $P$ and $\phi_{-}:\mathbb{D} \to \Omega_{-}(P)$ as in Theorem \ref{thmfactorP}. Without loss of generality, we can assume that the degree $n$ coefficient of $P$ is positive. Then $\Omega_{-}(P)$ is connected and so $\Gamma(P)$ is a proper lemniscate. Let $\phi_{+}: \mathbb{D}_{+} \to \Omega_{+}(P)$ be the conformal map normalized by $\phi_{+}(\infty)=\infty$ and $\phi_{+}'(\infty)>0$. Then, by the same argument as in the proof of the first part of the theorem, we get that $\phi_{+}^{-1}=P^{1/n}$ and hence $k=\phi_{+}^{-1} \circ \phi_{-} = B^{1/n}$. Finally, the uniqueness part follows directly from Theorem \ref{thm Kir}.

As noted in \cite{EKS}, the preceding results yield interesting information about the critical values of Blaschke products. Indeed, if $B$ is a Blaschke product and $P$ is as in Theorem \ref{thmfactorP}, then $B$ and $P$ have the same critical values in $\mathbb{D}$. Using this and \cite[Theorem 1.2]{BCN} on the number of polynomials sharing the same critical values, one can prove :

\begin{corollary}[Ebenfelt, Khavinson and Shapiro \cite{EKS}]
For $n \geq 3$ and $w_1,\dots,w_{n-1} \in \mathbb{D}$, there are $n^{n-3}$ Blaschke products of degree $n$, counted with multiplicites and modulo automorphisms of the unit disk, whose critical values in $\mathbb{D}$ are $w_1,\dots,w_{n-1}$. For $n=2$, there is only one equivalence class.
\end{corollary}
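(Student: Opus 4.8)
The plan is to transport the counting problem for Blaschke products to the corresponding counting problem for polynomials, where it is answered by \cite[Theorem 1.2]{BCN}. The bridge is the correspondence $B \leftrightarrow P$ furnished by Theorem \ref{thmfactorP}, together with the observation recorded above that $B$ and the associated polynomial $P$ share the same critical values in $\mathbb{D}$.

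First I would make the two equivalence relations precise and show that they match. A Blaschke product $B$ of degree $n$ has exactly $n-1$ critical points in $\mathbb{D}$, all of whose critical values lie in $\mathbb{D}$ since $B(\mathbb{D}) \subseteq \mathbb{D}$. Replacing the Riemann map $\phi_{-}$ by $\phi_{-} \circ \psi$ with $\psi \in \mathrm{Aut}(\mathbb{D})$ turns $B = P \circ \phi_{-}$ into $B \circ \psi$, so pre-composition by automorphisms of $\mathbb{D}$ is exactly the ambiguity in passing from $P$ to $B$, and it preserves critical values. On the polynomial side, the uniformization $g$ in the proof of Theorem \ref{thmfactorP} is unique up to $g \mapsto T \circ g$ with $T(z) = az + b$ affine (the Möbius maps fixing $\infty$), so $P = F \circ g^{-1}$ is determined up to pre-composition by such affine maps, which again preserve the critical values. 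I would then check that $B \mapsto P$ descends to a well-defined bijection between degree-$n$ Blaschke products modulo $\mathrm{Aut}(\mathbb{D})$ and degree-$n$ polynomials with all critical values in $\mathbb{D}$ modulo affine pre-composition, under which the multiset of critical values in $\mathbb{D}$ is preserved. By Proposition \ref{proper} these polynomials are exactly those whose lemniscate is proper, so the construction applies to all of them.

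With the bijection in hand, fixing $w_1, \dots, w_{n-1} \in \mathbb{D}$ identifies the degree-$n$ Blaschke products modulo $\mathrm{Aut}(\mathbb{D})$ with critical values $w_1, \dots, w_{n-1}$ with the degree-$n$ polynomials modulo affine pre-composition having the same critical values. Invoking \cite[Theorem 1.2]{BCN} on the number of polynomials with prescribed critical values then yields the count $n^{n-3}$ for $n \geq 3$. The case $n = 2$ is exceptional, the formula $n^{n-3}$ not even being an integer; it is handled directly, a single degree-$2$ Blaschke product class corresponding to the unique polynomial $z^2 + w_1$ modulo affine pre-composition.

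The main obstacle I anticipate is the bookkeeping that makes the bijection exact \emph{with multiplicities}, rather than merely generically. Two points require care. First, the branch $C = B^{1/n}$ used in the welding is determined only up to multiplication by an $n$-th root of unity; I would verify that the $n$ choices produce biholomorphic welded surfaces, and hence the same $P$ modulo affine maps, so that $B \mapsto P$ is genuinely single-valued. Second, one must confirm that the normalization under which \cite[Theorem 1.2]{BCN} is stated matches ``modulo affine pre-composition'': if that theorem instead counts monic, centered polynomials, of which there are $n^{n-2}$ for generic critical values, then one must quotient by the residual action $z \mapsto \zeta z$, $\zeta^n = 1$, of $\mathbb{Z}/n$, which accounts precisely for the reduction of the exponent from $n-2$ to $n-3$. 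Reconciling these normalizations, and checking that the correspondence is holomorphic so that multiplicities at degenerate configurations transfer intact, is where the real work lies.
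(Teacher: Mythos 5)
Your proposal is correct and takes essentially the same route as the paper: the paper's ``proof'' consists of the sketch preceding the corollary---that $B$ and the polynomial $P$ furnished by Theorem \ref{thmfactorP} share the same critical values in $\mathbb{D}$, so the count reduces to \cite[Theorem 1.2]{BCN}---with all details deferred to \cite[Corollary 3.2]{EKS}. Your additional bookkeeping (precomposition as the correct equivalence on both sides, surjectivity via Proposition \ref{proper}, and the $n$-th-root-of-unity ambiguity in the welding homeomorphism) fills in exactly the verifications that the cited reference carries out.
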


\begin{proof}
See \cite[Corollary 3.2]{EKS}.
\end{proof}

\section{Rational Lemniscates}
\label{secrat}
Let $R$ be a rational function of degree $n$. Define $\Gamma(R), \Omega_{-}(R)$ and $\Omega_{+}(R)$ as in the case of polynomials. Assume for simplicity that $R(\infty)=\infty$, so that $\infty \in \Omega_{+}(R)$. Note that unlike the case of polynomial lemniscates, $\Omega_{+}(R)$ need not be connected. However, if we assume that $\Omega_{-}(R)$ is simply connected (and in particular connected), the rational lemniscate $\Gamma(R)$ is a shape and we can consider its fingerprint $k:\mathbb{T} \to \mathbb{T}$ defined by $k=\phi_{+}^{-1} \circ \phi_{-}$, where $\phi_{-} : \mathbb{D} \to \Omega_{-}(R)$ and $\phi_{+} : \mathbb{D}_{+} \to \Omega_{+}(R)$ are conformal maps and $\phi_+$ is normalized by $\phi_{+}(\infty)=\infty$, $\phi_{+}'(\infty) > 0$. Hence $k$ satisfies the equation
$$\phi_{+} \circ k = \phi_{-}$$ and, composing with $R$ on both sides, we obtain the equation $A \circ k = B$, where $A= R \circ \phi_{+}$ and $B=R \circ \phi_{-}$ are Blaschke products of degree $n$, as in the proof of the first part of Theorem \ref{Pfingerprints}. Furthermore, since $R(\infty)=\infty=\phi_{+}(\infty)$, we get that $A(\infty)=\infty$.

It was conjectured in \cite{EKS} that all $C^{\infty}$ diffeomorphisms $k:\mathbb{T} \to \mathbb{T}$ satisfying such functional equations are fingerprints of rational lemniscates. The main obstacle in extending the proof of Theorem \ref{Pfingerprints} in \cite{EKS} to this case is the lack of a simple analytic criterion for when the set $\Omega_{-}(R)$ is simply connected, analogous to the one in Proposition \ref{proper}. It is easy to prove using the Riemann-Hurwitz formula that a necessary condition for $\Omega_{-}(R)$ to be a Jordan domain is that $n-1$ critical values of $R$ lie in the unit disk and the remaining $n-1$ critical values lie in the complement of the closed unit disk. Unfortunately, this condition is not sufficient.

However, this difficulty can be circumvented by using the following analogue of Theorem \ref{thmfactorP} :

\begin{theorem}
\label{thmfactorR}
Let $A$ and $B$ be Blaschke products of degree $n$ with $A(\infty)=\infty$. Then there exist a rational map $R$ of degree $n$ with $R(\infty)=\infty$ and conformal maps $\phi_{-}:\mathbb{D} \to \Omega_{-}(R)$ and $\phi_{+} : \mathbb{D}_{+} \to \Omega_{+}(R)$ normalized by $\phi_{+}(\infty)=\infty$, $\phi_{+}'(\infty)>0$ such that
$$A=R \circ \phi_{+}$$
on $\mathbb{D}_{+}$
and
$$B=R \circ \phi_{-}$$
on $\mathbb{D}$.
\end{theorem}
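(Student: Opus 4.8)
The plan is to run the welding construction from the proof of Theorem \ref{thmfactorP} essentially verbatim, now feeding in the general Blaschke product $A$ in place of the special map $z^n$. First I would note that since $A$ and $B$ are Blaschke products of degree $n$, their restrictions to $\mathbb{T}$ are orientation-preserving covering maps of $\mathbb{T}$ onto itself of degree $n$. Exactly as in the polynomial case, covering space theory produces a continuous $C:\mathbb{T}\to\mathbb{T}$ with $A\circ C=B$, and the induced relation on fundamental groups ($(\times n)\circ C_{*}=(\times n)$ on $\mathbb{Z}$) forces $C_{*}=\mathrm{id}$, so $C$ is a homeomorphism. Since a Blaschke product of degree $n$ has all $2n-2$ of its critical points off $\mathbb{T}$, both $A$ and $B$ are local biholomorphisms in a neighborhood of $\mathbb{T}$, whence $C=A^{-1}\circ B$ extends analytically across $\mathbb{T}$.

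Next I would form the welded surface $X:=\overline{\mathbb{D}}\sqcup(\RiemannSphere\setminus\mathbb{D})/\sim_{C}$ as before, which is topologically a sphere, and take a uniformizing biholomorphism $g:X\to\RiemannSphere$ normalized by $g(\infty)=\infty$. Defining $F:X\to\RiemannSphere$ by $F=B$ on $\overline{\mathbb{D}}$ and $F=A$ on $\RiemannSphere\setminus\mathbb{D}$, the identity $A\circ C=B$ makes $F$ well-defined across the analytic weld, and Morera's theorem makes $F$ holomorphic, so $R:=F\circ g^{-1}$ is rational. Counting preimages of a generic point, using that $A:\mathbb{D}_{+}\to\mathbb{D}_{+}$ and $B:\mathbb{D}\to\mathbb{D}$ are each proper of degree $n$, shows $\deg R=n$. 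The single place where the hypothesis $A(\infty)=\infty$ is used is here: the $n$ poles of $A$ now sit in $\mathbb{D}_{+}$ rather than being concentrated at $\infty$, which is precisely why $R$ is a genuine rational map and not a polynomial, while $g(\infty)=\infty$ together with $A(\infty)=\infty$ gives $R(\infty)=F(\infty)=A(\infty)=\infty$.

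Finally I would set $\phi_{-}:=g|_{\mathbb{D}}$ and $\phi_{+}:=g|_{\mathbb{D}_{+}}$. Since $g^{-1}(\Omega_{-}(R))=F^{-1}(\mathbb{D})$ is the disk copy and $g^{-1}(\Omega_{+}(R))=F^{-1}(\mathbb{D}_{+})$ is the outer copy, I get $\Omega_{-}(R)=g(\mathbb{D})$ and $\Omega_{+}(R)=g(\mathbb{D}_{+})$; in particular both are automatically simply connected, which is exactly how the construction bypasses the missing analytic criterion for simple connectivity of $\Omega_{-}(R)$ discussed after Theorem \ref{Pfingerprints}. By construction $R\circ\phi_{-}=F|_{\mathbb{D}}=B$ and $R\circ\phi_{+}=F|_{\mathbb{D}_{+}}=A$, and $\phi_{+}(\infty)=\infty$. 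To arrange $\phi_{+}'(\infty)>0$ I would post-compose $g$ with an affine map $T(z)=az+b$: this replaces $R$ by $R\circ T^{-1}$ (preserving $R(\infty)=\infty$ and all the relations above) and multiplies the leading Laurent coefficient of $\phi_{+}$ at $\infty$ by $a$, so a suitable choice of $a$ makes it positive.

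I expect the only delicate points to be the orientation bookkeeping — verifying that gluing $\overline{\mathbb{D}}$ to $\RiemannSphere\setminus\mathbb{D}$ along the orientation-preserving $C$ yields an oriented sphere carrying a complex structure in which $F$ is holomorphic — and confirming that $\deg R=n$ with $\Omega_{\pm}(R)$ correctly identified. The analytic ingredients (existence and analyticity of $C$, uniformization, Morera) are identical to the polynomial case, so the genuinely new content is simply the observation that permitting the poles of $A$ to lie in $\mathbb{D}_{+}$ converts the earlier polynomial output into the desired rational map.
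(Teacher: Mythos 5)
Your proposal is correct and follows essentially the same route as the paper: lift $B$ through the covering $A$ on $\mathbb{T}$ to get an analytic welding homeomorphism, uniformize the welded sphere $X$, and read off $R:=F\circ g^{-1}$ with $\phi_{\pm}:=g|_{\mathbb{D}},g|_{\mathbb{D}_{+}}$. The only (cosmetic) difference is that the paper builds the normalization $g(\infty)=\infty$, $g'(\infty)>0$ directly into the uniformizing map rather than fixing it afterwards by post-composing with an affine map, and your added details (degree count, why the lift $C$ is a homeomorphism and extends analytically) merely make explicit what the paper leaves implicit.
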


\begin{proof}
The argument is quite similar to the one used in Theorem \ref{thmfactorP}. Since $A$ and $B$ are both covering maps of degree $n$ of $\mathbb{T}$ onto itself, there exist an analytic homeomorphism $k:\mathbb{T} \to \mathbb{T}$ such that $A \circ k = B$. Consider the Riemann surface $X:=\overline{\mathbb{D}} \sqcup(\RiemannSphere \setminus \mathbb{D}) / \sim_k$. Again by the uniformization theorem, there exist a biholomorphism $g:X \to \RiemannSphere$ with $g(\infty)=\infty$ and $g'(\infty)>0$.

Now, define $F : X \to \RiemannSphere$ by
$$
F(z) := \left\{ \begin{array}{rl} B(z)  &  \mbox{for } z\in \overline{\mathbb{D}} \\ A(z) & \mbox{for }z \in \RiemannSphere \setminus \mathbb{D} \end{array} \right..
$$
Then $F$ is well-defined and holomorphic on $X$, by Morera's theorem. The composition $F \circ g^{-1} : \RiemannSphere \to \RiemannSphere$ is meromorphic and thus equal to a rational map $R$.

On $\mathbb{D}$, we have $R \circ g = F = B$ and on $\RiemannSphere \setminus \overline{\mathbb{D}}=\mathbb{D}_{+}$, we have $R \circ g = F = A$. The result follows by letting $\phi_{-}:=\left.g\right|_\mathbb{D}$ and $\phi_{+}:=\left.g\right|_{\mathbb{D}_{+}}$.
\end{proof}

As a consequence, we obtain :

\begin{theorem}
\label{Rfingerprints}
Let $R$ be a rational map of degree $n$ with $R(\infty)=\infty$ whose lemniscate $\Gamma(R)$ is proper. Then the fingerprint $k: \mathbb{T} \to \mathbb{T}$ of $\Gamma(R)$ is given by a solution $k$ to the functional equation
$$A \circ k = B,$$
where $A,B$ are Blaschke products of degree $n$ and $A(\infty)=\infty$.

Conversely, given any solution $k$ to a functional equation of the form
$$A \circ k = B$$
where $A,B$ are Blaschke products of degree $n$ and $A(\infty)=\infty$, there exist a rational map $R$ of degree $n$ with $R(\infty)=\infty$ whose lemniscate $\Gamma(R)$ is proper and has $k$ as its fingerprint. Moreover, $R$ is unique up to postcomposition with a linear map of the form $T(z)=az+b$, where $a>0$ and $b \in \mathbb{C}$.
\end{theorem}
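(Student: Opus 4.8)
The plan is to establish the two directions separately: the forward direction is the computation already recorded in the discussion preceding Theorem \ref{thmfactorR}, while the converse is read off from Theorem \ref{thmfactorR}, with uniqueness supplied by Theorem \ref{thm Kir}. For the forward direction I would begin from the hypothesis that $\Gamma(R)$ is proper, so that $\RiemannSphere \setminus \Gamma(R)$ consists of exactly the two simply connected Jordan domains $\Omega_{-}(R)$ and $\Omega_{+}(R)$. Since $\Omega_{-}(R)=R^{-1}(\mathbb{D})$, every preimage of a point of $\mathbb{D}$ lies in $\Omega_{-}(R)$, so $R|_{\Omega_{-}(R)}$ is a proper holomorphic map of degree $n$ onto $\mathbb{D}$ and $B:=R\circ\phi_{-}$ is a Blaschke product of degree $n$; symmetrically $A:=R\circ\phi_{+}$ is a degree $n$ proper self-map of $\mathbb{D}_{+}$, hence a Blaschke product, with $A(\infty)=R(\phi_{+}(\infty))=R(\infty)=\infty$. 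Writing the fingerprint as $k=\phi_{+}^{-1}\circ\phi_{-}$, so that $\phi_{+}\circ k=\phi_{-}$ on $\mathbb{T}$, and composing with $R$ yields $A\circ k=B$. This is verbatim the argument used for the first part of Theorem \ref{Pfingerprints}.

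For the converse I would feed the prescribed data into the construction of Theorem \ref{thmfactorR}, the one point needing care being that Theorem \ref{thmfactorR} manufactures its own welding homeomorphism, whereas here a particular solution $k$ of $A\circ k=B$ is given. I would therefore repeat that construction using the given $k$ itself. This is legitimate: $k$ is automatically real-analytic, since locally $k=A^{-1}\circ B$ and finite Blaschke products have non-vanishing derivative on $\mathbb{T}$, so the welded surface $X=\overline{\mathbb{D}}\sqcup(\RiemannSphere\setminus\mathbb{D})/\sim_{k}$ is a Riemann surface and $F$ is well defined and holomorphic. Uniformizing by $g:X\to\RiemannSphere$ with $g(\infty)=\infty$, $g'(\infty)>0$ and setting $R:=F\circ g^{-1}$, $\phi_{-}:=\left.g\right|_{\mathbb{D}}$, $\phi_{+}:=\left.g\right|_{\mathbb{D}_{+}}$ produces a degree $n$ rational map with $R(\infty)=\infty$, $R\circ\phi_{+}=A$ and $R\circ\phi_{-}=B$.

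It then remains to verify the two geometric assertions. Properness is immediate: $\Gamma(R)=g(\mathbb{T})$ is the image of the analytic curve $\mathbb{T}\subset X$ under the biholomorphism $g$, hence a real-analytic Jordan curve, while $\Omega_{-}(R)=g(\mathbb{D})$ is simply connected. To see that the fingerprint equals the prescribed $k$, I would track the seam: the welding identifies a boundary point $\zeta$ of the disk copy with the point $k(\zeta)$ of the exterior copy, this being exactly the identification forced by $B(\zeta)=A(k(\zeta))$ that renders $F$ well defined. Continuity of $g$ across the seam then gives $\phi_{-}(\zeta)=\phi_{+}(k(\zeta))$ on $\mathbb{T}$, i.e. $\phi_{+}^{-1}\circ\phi_{-}=k$; since $\phi_{+}$ carries the required normalization, this is precisely the fingerprint of $\Gamma(R)$.

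Uniqueness is then inherited from Theorem \ref{thm Kir} exactly as in the polynomial setting of Theorem \ref{Pfingerprints}: two admissible rational maps sharing the fingerprint $k$ have, by injectivity of $\mathcal{F}$, lemniscates that coincide after a linear map $T(z)=az+b$ with $a>0$, and transporting this identification through the normalized conformal maps (using that the affine maps are the only Möbius transformations fixing $\infty$ with positive derivative there) forces the two rational maps to differ only by postcomposition with such a $T$. I expect the genuinely delicate step to be the verification that the constructed $R$ realizes the \emph{prescribed} solution $k$ rather than one of the other $n-1$ solutions of $A\circ k=B$; this is exactly why it is essential to carry out the welding with the given $k$ and to orient the boundary identification correctly, and it is the point where the argument goes beyond merely quoting Theorem \ref{thmfactorR}.
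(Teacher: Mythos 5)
Your proposal is correct and follows essentially the same route as the paper, which leaves the deduction of Theorem \ref{Rfingerprints} implicit (``As a consequence, we obtain''): the forward direction is the computation preceding Theorem \ref{thmfactorR}, the converse is the welding construction of Theorem \ref{thmfactorR} carried out with the \emph{given} solution $k$ as the welding homeomorphism, and uniqueness comes from Theorem \ref{thm Kir}. Your added details --- the automatic real-analyticity of $k$ via $k=A^{-1}\circ B$ locally, the seam-tracking that shows $\phi_{+}^{-1}\circ\phi_{-}=k$, and the observation that welding with the prescribed $k$ (rather than an arbitrary solution) is exactly what guarantees the constructed lemniscate has fingerprint $k$ --- are precisely the steps the paper's terse presentation takes for granted.
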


\acknowledgments{The author thanks Maxime Fortier Bourque for helpful discussions as well as Eric Schippers for relevant comments on the historical background of Theorem \ref{thm Kir}.}

\bibliographystyle{amsplain}

\end{document}